\documentclass{article}
\usepackage{amsmath}
\usepackage{amsfonts}
\usepackage{amsthm}
\usepackage{amssymb}
\usepackage[english]{babel}
\usepackage[ansinew]{inputenc}
\usepackage[all]{xy}
\usepackage{color}

\theoremstyle{definition}

\newtheorem{defi}{Definition}[section]
\newtheorem{remark}[defi]{Remark}
\newtheorem{notation}[defi]{Notation}

\theoremstyle{plain}

\newtheorem{theorem}[defi]{Theorem}

\newtheorem{lemma}[defi]{Lemma}

\author{Marco Antei, Vikram B. Mehta}

\title{On the Grothendieck-Lefschetz Theorem for a Family of Varieties}
\begin{document}
\maketitle

\noindent \textbf{Abstract.} Let $k$ be an algebraically closed field of characteristic $p>0$, $W$ the ring of Witt vectors over $k$ and ${R}$ the integral closure of $W$ in the algebraic closure ${\overline{K}}$ of $K:=Frac(W)$; let moreover $X$ be a smooth, connected and projective scheme over $W$  and $H$ a relatively very ample line bundle over $X$. We prove that when $dim(X/{W})\geq 2$  there exists an integer $d_0$, depending only on $X$, such that for any $d\geq d_0$, any $Y\in |H^{\otimes d}|$ connected and smooth over ${W}$ and any $y\in Y({W})$ the natural ${R}$-morphism of fundamental group schemes $\pi_1(Y_R,y_R)\to \pi_1(X_R,y_R)$  is faithfully flat,  $X_R$, $Y_R$, $y_R$ being respectively the pull back of $X$, $Y$, $y$ over $Spec(R)$.  If moreover  $dim(X/{W})\geq 3$ then there exists an integer $d_1$, depending only on $X$, such that for any $d\geq d_1$, any $Y\in |H^{\otimes d}|$ connected and smooth over ${W}$ and any section $y\in Y({W})$ the morphism $\pi_1(Y_R,y_R)\to \pi_1(X_R,y_R)$ is an isomorphism. 
\medskip
\\\indent \textbf{Mathematics Subject Classification}: 14J60, 14L15.\\\indent
\textbf{Key words}: Fundamental group scheme, essentially finite vector bundles, Grothendieck-Lefschetz theorem.

\tableofcontents
\bigskip

\section{Introduction}\label{sez:Intro} 

The notion of fundamental group scheme of a connected and reduced scheme over a perfect field has been introduced by Madhav Nori in \cite{Nor} and \cite{Nor2} as the affine group scheme over $k$ naturally associated to the tannakian category of essentially finite vector bundles. Then in \cite{Gas} it has been generalized by Gasbarri for integral schemes over a connected Dedekind scheme. In this latter  description, however,  the tannakian tool used by Nori is absent. This has been added  by  the second (alphabetically) author and Subramanian in  \cite{MS2} where they describe  the fundamental group scheme of a  smooth and projective scheme over a certain Pr\"ufer ring $R$ (more details will be recalled in section \ref{sec:Theorems}) by means of tannakian lattices introduced by Wedhorn in \cite{Wed}.\\

Now let $Z$ be a smooth and projective variety over an algebraically closed field $k$. If $Y$ is any smooth ample hypersurface on $Z$  and $y$ any point of $Y$  then by Grothendieck-Lefschetz theory (cf. \cite{SGA2}, Exposé X) we know that the induced group homomorphism between the \'etale fundamental groups $$\pi_1^{\text{\'et}}(Y,y)\to \pi_1^{\text{\'et}}(Z,y)$$ is surjective when $dim(Z)\geq 2$ and an isomorphism when $dim(Z)\geq 3$, thus in particular if $char(k)=0$ the same result automatically holds for the fundamental group scheme (even when $k$ is not algebraically closed and this can be seen using the fundamental short exact sequence). When $char(k)=p>0$ then in \cite{Mehta1} and \cite{BiswHoll} it has been proved, independently, that theorems of  Grothendieck-Lefschetz type hold in the following formulation: let $H$ be a very ample line bundle over $Z$ then when $dim(Z) \geq 2$ the natural homomorphism $\widehat{\varphi}:\pi_1(Y,y)\to \pi_1(Z,y)$  between fundamental group schemes induced by the inclusion map $\varphi:Y\hookrightarrow Z$ is faithfully flat  whenever $Y$ is in the complete linear system $|H^{\otimes d}|$ for any integer $d\geq d_0$ where  $d_0$ is an integer depending only on $Z$.  If moreover $dim(Z) \geq 3$, then $\widehat{\varphi}$ is an isomorphism whenever $Y$ is in the complete linear system $| H^{\otimes d}|$ for any integer $d\geq d_1$ where  $d_1$ is an integer depending only on $Z$.  \\

Let finally $k$ be an algebraically closed field of characteristic $p>0$, $W$ the ring of Witt vectors over $k$ and ${R}$ the integral closure of $W$ in the algebraic closure ${\overline{K}}$ of $K:=Frac(W)$, in this paper we prove the following generalization (where the subscript ${}_R$ will denote the pull back over $Spec(R)$):

\begin{theorem}(Cf. Theorems \ref{theorem1} and \ref{theorem2}) Let  ${X}$ be a smooth and projective scheme over ${W}$  and  $H$ a relatively very ample line bundle over $X$. \begin{enumerate}

\item If  $dim(X/{W})\geq 2$ then there exists an integer $d_0$ (depending only on $X$) such that for any $d\geq d_0$, any $Y\in |H^{\otimes d}|$ connected and smooth over ${W}$  and any $y\in Y({W})$ the natural ${R}$-morphism of fundamental group schemes $\pi_1(Y_R,y_R)\to \pi_1(X_R,y_R)$  is faithfully flat. 

\item If moreover  $dim(X/{W})\geq 3$ then there exists an integer $d_1$ (depending only on $X$) such that for any $d\geq d_1$, any $Y\in |H^{\otimes d}|$ connected and smooth over ${W}$ and any $y\in Y({W})$ the  morphism $\pi_1(Y_R,y_R)\to \pi_1(X_R,y_R)$ is an isomorphism. 
\end{enumerate}
\end{theorem}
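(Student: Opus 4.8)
The plan is to reduce the statement over $R$ to the known characteristic-$p$ Grothendieck–Lefschetz theorem of \cite{Mehta1}, \cite{BiswHoll} on the special fibre together with a characteristic-zero argument (via the classical \cite{SGA2}) on the generic fibre, and then to glue these using the tannakian description of $\pi_1$ over a Pr\"ufer ring via tannakian lattices from \cite{MS2}, \cite{Wed}. Concretely, an $R$-morphism of affine group schemes $\pi_1(Y_R,y_R)\to\pi_1(X_R,y_R)$ is faithfully flat precisely when the corresponding pullback functor on the categories of essentially finite bundles (equivalently, on the representation categories) is fully faithful with image closed under subobjects; and it is an isomorphism when that pullback functor is moreover essentially surjective. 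So the whole theorem is really a statement about restricting essentially finite vector bundles from $X_R$ to $Y_R$: for faithful flatness one must show $H^0(X_R,V)\xrightarrow{\sim}H^0(Y_R,V|_{Y_R})$ for $V$ essentially finite and that a bundle on $Y_R$ which extends is essentially finite on $X_R$; for isomorphism one must in addition show every essentially finite bundle on $Y_R$ is the restriction of one on $X_R$.

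First I would fix the numerical input: choose $d_0$ (resp. $d_1$) large enough that the conclusions of \cite{Mehta1}/\cite{BiswHoll} hold on the special fibre $X_k$, that the relevant cohomology groups $H^i(X,\mathcal{E}(-d))$ vanish for $i\le \dim(X/W)-1$ (resp. $\le \dim(X/W)$) and all the finitely many bundles $\mathcal{E}$ that arise from a bounded family, and that $Y$ is ample enough for Serre-type vanishing and for the Enriques–Severi–Zariski lemma to apply fibrewise; since $X$ is noetherian and proper over $W$ these $d_0,d_1$ depend only on $X$. Then, using flatness of $\mathcal{O}_X(-Y)\to\mathcal{O}_X\to\mathcal{O}_Y$ over $W$ and cohomology-and-base-change, I would push the vanishing statements from the fibres to the total spaces over $W$, and thence to $X_R$ after the flat base change $W\to R$ (noting $R$ is a filtered colimit of finite flat $W$-algebras, so everything descends to a finite level and cohomology commutes with the colimit). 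This yields the key restriction isomorphism $H^0(X_R,V)\cong H^0(Y_R,V|_{Y_R})$ and the corresponding $H^1$-injectivity needed to control extensions.

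The second, more structural, step is to transfer essential finiteness across the restriction. For faithful flatness: given $V$ essentially finite on $X_R$, its restriction $V|_{Y_R}$ is essentially finite (pullback of essentially finite is essentially finite), and the $H^0$-isomorphism above plus the tannakian lattice formalism of \cite{MS2} shows the restriction functor $\mathrm{EF}(X_R)\to\mathrm{EF}(Y_R)$ is fully faithful and exact, giving the faithfully flat quotient. For the isomorphism statement when $\dim(X/W)\ge 3$: given $W'$ essentially finite on $Y_R$, one first extends the underlying bundle to $X_R$ — here the higher-dimensional cohomological vanishing ($H^1(X_R,\mathcal{E}nd(W)(-d))=0$ etc.) lets one lift $W'$ to a bundle $V$ on $X_R$ uniquely — and then one must check $V$ is again essentially finite, which one does by checking it fibrewise (on $X_k$ via \cite{Mehta1}, on $X_{\overline K}$ via \cite{SGA2} and GAGA-type comparison of Nori's group with the étale one) and invoking the characterization of essentially finite bundles over a Pr\"ufer ring as those whose fibres are essentially finite with compatible "Frobenius-trivializing" data, again as in \cite{MS2}.

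The main obstacle I expect is precisely this last gluing of the fibrewise essential-finiteness data into an honest tannakian lattice / essentially finite bundle over $R$: over the special fibre one has Nori's semistable, finite-at-a-Frobenius-pullback description, over the generic fibre one has the characteristic-zero theory, and one must show that an extension $V$ over $X_R$ of an essentially finite bundle on $Y_R$ is trivialized by a torsor under a finite flat $R$-group scheme — this requires knowing that the finite group scheme obtained on each fibre spreads out over $R$ and that the trivializing torsor likewise spreads out, which is where properness of $X/W$, the Pr\"ufer (hence Bézout) nature of $R$, and the boundedness of the families of essentially finite bundles in bounded degree all have to be combined carefully. A secondary but routine difficulty is bookkeeping the passage from $W$ to the non-noetherian ring $R$; this is handled throughout by writing $R=\varinjlim R_i$ over finite flat $W$-subalgebras and using that $X$, $Y$, $y$, and any given essentially finite bundle are already defined over some $R_i$, so all the finiteness and cohomological inputs are available at that finite level before taking the colimit.
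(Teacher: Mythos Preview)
Your outline for part (1) is close to the paper's argument: the paper phrases faithful flatness as ``Nori-reduced torsors on $X'$ restrict to Nori-reduced torsors on $Y'$,'' which is the torsor-side version of your $H^0$-isomorphism criterion, and both reduce to the uniform vanishing of $H^1(X_k,V(-d))$ for essentially finite $V$. One caution: you cannot get that uniform vanishing by appealing to ``the finitely many bundles $\mathcal{E}$ that arise from a bounded family.'' Essentially finite bundles form an infinite (if bounded) family, and the paper's Section~2 proves the uniform bound by a Frobenius argument (inject $H^i(X_k,V(-n))\hookrightarrow H^i(X_k,V^{(p^m)}(-np^m))$ via the Cartier sequences, then use that the stable constituents of the $F^{m*}V$ lie in a finite set). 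This is the actual technical engine; a bare boundedness-plus-Serre argument does not give Enriques--Severi--Zariski vanishing uniformly.

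For part (2) there is a genuine gap. You write that ``the higher-dimensional cohomological vanishing ($H^1(X_R,\mathcal{E}nd(W)(-d))=0$ etc.) lets one lift $W'$ to a bundle $V$ on $X_R$ uniquely,'' but this is circular as stated (there is no $W$ on $X_R$ yet whose $\mathcal{E}nd$ you can form) and, more importantly, it elides the mechanism entirely. The paper does \emph{not} extend the bundle across the hypersurface directly. Instead it works $p$-adically: from $V$ on $Y'$ one first gets $U_0$ on the special fibre $X'_0=X'_k$ by the known characteristic-$p$ theorem, and then lifts $U_0$ through the infinitesimal thickenings $X'_n=X'\times_W W/p^{n+1}$ by deformation theory. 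The uniform vanishing of $H^1$ and $H^2$ of $\mathcal{E}nd(U_0)(-d)$ on $X'_0$ (Lemmas~\ref{lemmaVik1} and \ref{lemmaVik2}) makes the restriction maps $H^i(X'_0,\mathcal{E}nd(U_0))\to H^i(Y'_0,\mathcal{E}nd(V_0))$ an isomorphism for $i=1$ and injective for $i=2$; injectivity on $H^2$ kills the obstruction to lifting $U_{n-1}$ to $X'_n$ (since $V_n$ exists on $Y'_n$), and the $H^1$-isomorphism lets one correct the lift so that it restricts to $V_n$. One then takes the limit to a bundle on the formal scheme and invokes Grothendieck's existence theorem (\cite{EGAIII-1}, \S5.1) to algebraize to $U$ on $X'$. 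None of this deformation/formal-GAGA step appears in your sketch.

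Finally, the obstacle you single out---spreading a finite flat group scheme and its trivializing torsor over $R$---is not where the difficulty lies. In the paper, once $U$ exists on $X'$, its special fibre $U_0$ is essentially finite by construction, and one checks the generic fibre $U_{K'}$ is essentially finite by pulling back along the \'etale $G$-torsor $T'\to X'_{K'}$ supplied by classical Grothendieck--Lefschetz in characteristic zero and using a direct semistability/vanishing argument; no gluing of group schemes over $R$ is needed.
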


\indent \textbf{Acknowledgments:} M. Antei would like to thank Vikram Mehta for the invitation to the Tata Institute of Fundamental Research of Mumbai where this paper has been conceived. V. B. Mehta would like to thank the ICTP, Trieste for hospitality.

\section{Vanishing lemmas}

\begin{notation}Throughout this section $k$ will be an algebraically closed field of positive characteristic $p$ and $X$ a smooth and projective scheme over $k$.
\end{notation}

In  section \ref{sec:Theorems} we will strongly make use of the   vanishing Lemmas \ref{lemmaVik1} and \ref{lemmaVik2} whose idea  is already contained in \cite{Mehta1}, lemmas 2.2 and 3.3. Let $H$ be a very ample line bundle on $X$. Moreover let $F_X:X\to X$ be the absolute Frobenius morphism, then we denote by $F_X^{m}$ the $m$-th iterate of $F_X$. We recall that a vector bundle $V$ over $X$  is essentially finite (cf. \cite{Nor} and \cite{Nor2} for Nori's definition) if there exists a finite $k$-group scheme $G$  and a principal $G$-bundle $\pi:E\to X$ such that $\pi^{\ast}(V)$ is trivial on $E$. We also recall (cf. \cite{MS1}) that a vector bundle $V$ over $X$ is called $F$-trivial if there exists an integer $n\geq 0$ such that $F_X^{n \ast}(V)$ is trivial on $X$.  When $V$ is essentially finite  then there exists an integer $n\geq 0$ such that $F_X^{n \ast}(V)$ is Galois \'etale trivial (cf. \cite{MS1}), i.e. there exists  a Galois \'etale covering $\pi':E'\to X$ such that  $\pi'^{\ast}(F_X^{n \ast}(V))$ is trivial on $E'$ (the converse is also true). We denote  by $EF(X)$ the tannakian category of essentially finite vector bundles over $X$. We finally recall that $V\in EF(X)$ if and only if the dual $V^{\ast}\in EF(X)$. We will often use the notation $V^{(p^m)}=F_X^{m \ast}(V)$ for every integer $m\geq 0$ for the comfort of the reader. 

\begin{remark}\label{remCartier} Let $\Omega_X^{\bullet}$ be the De Rham complex and let us set $B^j_X:=Im(d:\Omega_X^{j-1}\to \Omega_X^{j})$ and $Z^j_X:=Ker(d:\Omega_X^{j}\to \Omega_X^{j+1})$ for all $j>0$, then we can define the Cartier operator $$C_X:Z^{\bullet}_X\to \Omega_X^{\bullet}$$ whose kernel is $B^{\bullet}_X$; in particular we have the following exact sequences of vector bundles
\begin{equation}\label{eqCar1}0\to B^{1}_X\to Z^{1}_X \to \Omega_X^{1}\to 0,\end{equation}
\begin{equation}\label{eqCar2}0\to \mathcal{O}_X \to {F_X}_{\ast}(\mathcal{O}_X)\to B^1_X\to 0,\end{equation}
\begin{equation}\label{eqCar3}0\to Z^{1}_X \to {F_X}_{\ast}(\Omega_X^{1})\to B^{2}_X\to 0.\end{equation}
\end{remark}

\begin{lemma}\label{lemmaVik1} Assume $dim(X)\geq 2$. Let $V$ be any essentially finite vector bundle over $X$, then there exists a \textit{uniform} (i.e. depending only on $X$) positive integer $n_0$ such that  $H^1(X,V(-n))$  vanishes for all $n> n_0$.
\end{lemma}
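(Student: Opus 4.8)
The plan is to reduce the vanishing of $H^1(X, V(-n))$ for all large $n$ to a statement that holds for \emph{every} vector bundle once $n$ is large, together with a uniform control coming from the fact that an essentially finite bundle is, after a bounded number of Frobenius pullbacks, \'etale-trivialized. First I would observe that Serre vanishing alone does not suffice because the twist $V(-n)$ goes in the wrong direction; instead one must exploit the structure of $V$. The key device is the following: since $V \in EF(X)$, the bundle $V^{(p^m)} = F_X^{m\ast}(V)$ becomes Galois \'etale trivial for some $m$, and crucially the collection of essentially finite bundles of bounded rank is controlled so that $m$ and all relevant numerical invariants can be chosen uniformly in terms of $X$ alone (here one uses that $EF(X)$ with a fixed Frobenius-stable finite set of generating objects, or the boundedness of the relevant moduli, gives a uniform $m$). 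So I would fix such a uniform $m$ and work with $W := V^{(p^m)}$, which is trivialized by a Galois \'etale cover $\pi': E' \to X$.

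Next I would relate $H^1(X, V(-n))$ to cohomology of $W$ via the Frobenius. The point is that $F_X^{m\ast}$ sends $V(-n) = V \otimes H^{-n}$ to $W \otimes H^{-np^m}$, and since $F_X$ is a finite flat morphism, one has a splitting of $V(-n)$ as a direct summand of $(F_X^m)_\ast (W \otimes H^{-np^m})$ up to the usual Cartier-type filtration — this is exactly where Remark \ref{remCartier} and the exact sequences \eqref{eqCar1}, \eqref{eqCar2}, \eqref{eqCar3} enter. Concretely, iterating the sequence \eqref{eqCar2} expresses $(F_X^m)_\ast \mathcal{O}_X$ as an extension built out of the bundles $B^1_X$ and their Frobenius descendants, all of which are fixed bundles depending only on $X$; tensoring through and chasing the long exact sequences reduces the vanishing of $H^1(X, V(-n))$ to the vanishing of $H^1(X, \mathcal{E} \otimes H^{-n})$ for finitely many \emph{fixed} bundles $\mathcal{E}$ on $X$ (depending only on $X$, built from $W$, $\Omega^1_X$, $B^i_X$, $Z^i_X$ and so on), plus possibly an $H^1$ of a bundle trivialized by $\pi'$, which one handles by pushing forward along the finite morphism $\pi'$ and again reducing to fixed bundles on $X$.

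Once the problem is reduced to $H^1(X, \mathcal{E} \otimes H^{-n}) = 0$ for a fixed finite list of bundles $\mathcal{E}$, I would invoke Serre duality: $H^1(X, \mathcal{E} \otimes H^{-n})^\vee \cong H^{d-1}(X, \mathcal{E}^\vee \otimes \omega_X \otimes H^n)$ where $d = \dim X \geq 2$, so $d - 1 \geq 1 > 0$, and then Serre vanishing for the ample line bundle $H$ kills $H^{d-1}$ of the positively-twisted fixed bundle $\mathcal{E}^\vee \otimes \omega_X$ for all $n$ larger than some bound $n_0(\mathcal{E})$; taking the maximum over the finite list gives the uniform $n_0$. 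The hypothesis $\dim X \geq 2$ is used precisely here, to ensure the dual cohomology sits in a strictly positive degree. The main obstacle I anticipate is making the uniformity of $m$ (the number of Frobenius twists needed to \'etale-trivialize, and hence the size of the fixed list of auxiliary bundles) genuinely depend on $X$ only and not on $V$; I would address this by appealing to the boundedness of essentially finite bundles — their semistability with vanishing Chern classes and the finiteness built into Nori's construction force the relevant invariants into a bounded range, so the Frobenius-pullback and Cartier-filtration constructions involve only boundedly many fixed bundles on $X$.
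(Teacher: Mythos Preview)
Your strategy has a genuine gap at the uniformity step, and it stems from misidentifying \emph{where} the uniform bound $n_0$ comes from.

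You propose to fix a uniform $m$ (depending only on $X$) such that $V^{(p^m)}$ is Galois \'etale trivial, and then reduce to a \emph{fixed finite list} of bundles $\mathcal{E}$ to which Serre vanishing applies. But no such uniform $m$ exists: essentially finite bundles have arbitrary rank, and the local (Frobenius) part of $\pi_1(X)$ is typically infinite, so for every $m$ there are essentially finite $V$ with $V^{(p^m)}$ not \'etale trivial. Your proposed fix via ``boundedness of essentially finite bundles'' cannot work for the same reason, and in any case the bundle $W=V^{(p^m)}$ you put into your list $\{\mathcal{E}\}$ still depends on $V$, so the list is not fixed. Relatedly, the claim that $V(-n)$ is a direct summand of $(F_X^m)_*\bigl(W\otimes H^{-np^m}\bigr)$ is false in characteristic $p$: the map $\mathcal{O}_X\to F_{X*}\mathcal{O}_X$ does not split in general, which is precisely why the Cartier sequences \eqref{eqCar1}--\eqref{eqCar3} are needed.

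The paper's argument locates the uniformity elsewhere. Tensoring \eqref{eqCar2} by $V(-n)$ gives an injection $H^1(X,V(-n))\hookrightarrow H^1(X,V^{p}(-np))$ as soon as $H^0(X,V(-n)\otimes B^1_X)=\mathrm{Hom}(V^*(n),B^1_X)$ vanishes; since $V^*$ is essentially finite (hence semistable of slope $0$), this holds once $n>n_0:=\mu_{\max}(B^1_X)$, a quantity that depends only on $X$. Iterating yields $H^1(X,V(-n))\hookrightarrow H^1(X,V^{(p^m)}(-np^m))$ for \emph{every} $m$, still with the same $n_0$. Now one is free to let $m$ depend on $V$: for $m$ large enough $V^{(p^m)}$ is \'etale trivial, and (using Frobenius periodicity of its stable components, via \cite{Duc}) the stable pieces of $\{V^{(p^t)}\}_{t}$ form a finite set, so Enriques--Severi--Zariski--Serre vanishing eventually kills $H^1(X,V^{(p^m)}(-np^m))$. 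The non-uniform $m$ is harmless because the injection already holds for all $m$ once $n>n_0$.
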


\proof
 (See also \cite{BiswHoll}, Lemma 4.7.) Let us  consider the exact sequence (\ref{eqCar2}):
$$
0\to \mathcal{O}_X \to {F_X}_{\ast}(\mathcal{O}_X)\to B^1_X\to 0
$$
and tensor it by $V(-n)$; then we have  the induced long exact sequence

$$..\to  H^0(X,V(-n)\otimes B^1_X)\to  H^1(X,V(-n))\to  H^1(X,{F_X}_{\ast}(\mathcal{O}_X)\otimes V(-n))\to ..$$
By the projecton formula we have ${F_X}_{\ast}(\mathcal{O}_X)\otimes V(-n)\simeq {F_X}_{\ast}({F_X}^{\ast}(V(-n)))$ thus $H^1(X,{F_X}_{\ast}(\mathcal{O}_X)\otimes V(-n))\simeq H^1(X,{F_X}^{\ast}(V(-n)))= H^1(X,V^p(-np))$. Moreover $$H^0(X,V(-n)\otimes B^1_X)\simeq \mathcal{H}om(V^{\ast}(n),B^1_X)(X)\simeq Hom(V^{\ast}(n),B^1_X)$$
and $Hom(V^{\ast}(n),B^1_X)=0$ as soon as  $\mu(V^{\ast}(n))> \mu_{max}(B^1_X)$. Set $n_0:=\mu_{max}(B^1_X)$, so that previous inequality becomes $n>n_0$. Thus, when $n>n_0$, the morphism 
$$H^1(X,V(-n))\to  H^1(X,V^p(-np))$$
becomes injective. Iterating the process (tensor (\ref{eqCar2}) by $V^p(-np)$ and so on) we obtain the injective morphism

\begin{equation}\label{eqINJ}
H^1(X,V(-n))\to  H^1(X,V^{(p^m)}(-np^m))
\end{equation}

for every $m\geq 0$ and every $n> n_0$ where, we recall, $n_0=\mu_{max}(B^1_X)$ depends only on $X$. We have already recalled that since $V\in EF(X)$ then  there exists an integer $l\geq 0$  such that  $F_X^{l \ast}(V)$ is Galois \'etale trivial, then so are its stable components $V_i, i=1, .., L$, where $L$ is the length of its Jordan-H\"older filtrations. Thus every $V_i$ is stable and Galois \'etale trivial then by \cite{Duc}, Th\'eor\`eme 2.3.2.4 for every $i$ there exists an integer $t_i> 0$ such that $F_X^{t_i \ast}(V_i)\simeq V_i$. This implies that the isomorphism classes of stable components of the vector bundles in the family $\{F_X^{t \ast}(V)\}_{t>0}$ are only finitely many. Denote these isomorphism classes of stable vector bundles by $\{W_j\}_{j\in J}$, then by the Enriques-Severi-Zariski-Serre vanishing lemma  and the fact that $|J|<+\infty$ we know that  $H^1(X,W_j(-np^s))=0$ for all $s>>0$ then $H^1(X,V^{(p^m)}(-np^m))$ for all $m>>0$. Now remember that for $n>n_0$ we have  the injection (\ref{eqINJ}) hence  $H^1(X,V(-n))=0$.
\endproof

\begin{remark}\label{remDecember}
Using arguments similar to those that in Lemma \ref{lemmaVik1} allowed us to prove that $H^1(X,V^{(p^m)}(-np^m))$ vanishes for all $m>>0$ one also proves  that  $ H^2(X,V^{(p^t)}(-np^t))$ vanishes for all $t >>0$.
\end{remark}

The aim of the reminder of this section is to prove a vanishing result for the group $H^2(X,V(-n))$. This will be done in Lemma \ref{lemmaVik2}, but we first need some preliminary steps. Let us start with a lemma which is also proved in  \cite{BiswHoll}, Proposition 4.11:

\begin{lemma}\label{lemmaOmega} Assume $dim(X)\geq 2$. Let $V$ be any essentially finite vector bundle over $X$, then there exists a \textit{uniform}  positive integer $n_1$ such that   $H^1(X,\Omega_X^1\otimes V(-n))$ vanishes for all  $n> n_1$.
\end{lemma}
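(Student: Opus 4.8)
The plan is to mimic the proof of Lemma \ref{lemmaVik1}, but starting from the Cartier sequences (\ref{eqCar1}) and (\ref{eqCar3}) which involve $\Omega_X^1$, rather than from (\ref{eqCar2}). The goal is to produce an injection of $H^1(X,\Omega_X^1\otimes V(-n))$ into $H^1(X,\Omega_X^1\otimes V^{(p^m)}(-np^m))$ for all $m$ and all $n$ above a uniform bound, and then to kill the latter group for $m\gg 0$ using the fact that the stable Jordan--H\"older components of $\{F_X^{t\ast}(V)\}_{t>0}$ fall into finitely many isomorphism classes, together with Serre vanishing.

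First I would tensor the exact sequence (\ref{eqCar3}), namely $0\to Z^1_X\to {F_X}_\ast(\Omega_X^1)\to B^2_X\to 0$, by $V(-n)$ and take cohomology. By the projection formula, ${F_X}_\ast(\Omega_X^1)\otimes V(-n)\simeq {F_X}_\ast(\Omega_X^1\otimes F_X^\ast(V(-n)))={F_X}_\ast(\Omega_X^1\otimes V^{(p)}(-np))$, so $H^1(X,{F_X}_\ast(\Omega_X^1)\otimes V(-n))\simeq H^1(X,\Omega_X^1\otimes V^{(p)}(-np))$. The long exact sequence then gives, between $H^0(X,B^2_X\otimes V(-n))$ and $H^1(X,Z^1_X\otimes V(-n))$, a connecting map, and the map $H^1(X,Z^1_X\otimes V(-n))\to H^1(X,\Omega_X^1\otimes V^{(p)}(-np))$ becomes injective as soon as $H^0(X,B^2_X\otimes V(-n))=0$, which holds once $\mu(V^\ast(n))>\mu_{max}(B^2_X)$, i.e. $n>\mu_{max}(B^2_X)$. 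Next I would tensor (\ref{eqCar1}), namely $0\to B^1_X\to Z^1_X\to \Omega_X^1\to 0$, by $V(-n)$; the piece $H^1(X,Z^1_X\otimes V(-n))\to H^1(X,\Omega_X^1\otimes V(-n))\to H^2(X,B^1_X\otimes V(-n))$ shows that $H^1(X,\Omega_X^1\otimes V(-n))$ surjects onto a subgroup of $H^2(X,B^1_X\otimes V(-n))$; combining with the previous injection I want to arrange that $H^1(X,\Omega_X^1\otimes V(-n))$ injects (after controlling the $B^1_X$-contribution) into $H^1(X,\Omega_X^1\otimes V^{(p)}(-np))$. The cleanest route is to observe that $H^2(X,B^1_X\otimes V(-n))$ sits in the long exact sequence of (\ref{eqCar2})$\otimes V(-n)$ between $H^2(X,{F_X}_\ast(\mathcal{O}_X)\otimes V(-n))\simeq H^2(X,V^{(p)}(-np))$ and $H^3(X,V(-n))$, so one has to track these auxiliary $H^2$ and $H^3$ groups; I would instead prefer to combine (\ref{eqCar1}) and (\ref{eqCar3}) directly with (\ref{eqCar2}) so that the only surviving error terms after iterating are the $H^0$'s, which vanish by slope reasons above a uniform bound, and a top-cohomology term that one can feed into Remark \ref{remDecember} (the $H^2$ vanishing for twists of Frobenius pullbacks).

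Iterating this construction — replace $V(-n)$ by $V^{(p)}(-np)$, then $V^{(p^2)}(-np^2)$, and so on, each time the slope condition $np^j>\mu_{max}(B^i_X)$ being automatically satisfied for $n$ larger than a fixed bound not depending on $j$ — yields, for a uniform $n_1$, an injection
\begin{equation*}
H^1(X,\Omega_X^1\otimes V(-n))\hookrightarrow H^1(X,\Omega_X^1\otimes V^{(p^m)}(-np^m))
\end{equation*}
for every $m\geq 0$ and every $n>n_1$. Now, as recalled in the proof of Lemma \ref{lemmaVik1}, there is an $l\geq 0$ with $F_X^{l\ast}(V)$ Galois \'etale trivial, hence by \cite{Duc}, Th\'eor\`eme 2.3.2.4, the stable components $V_i$ of $F_X^{l\ast}(V)$ are Frobenius-periodic, so the isomorphism classes of stable components occurring in $\{F_X^{t\ast}(V)\}_{t>0}$ form a finite set $\{W_j\}_{j\in J}$. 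The bundle $\Omega_X^1\otimes V^{(p^m)}(-np^m)$ has a filtration whose graded pieces are $\Omega_X^1\otimes W_j(-np^m)$ for various $j\in J$, and by the Enriques--Severi--Zariski--Serre vanishing theorem $H^1(X,\Omega_X^1\otimes W_j(-np^m))=0$ for all $m$ large (using $|J|<+\infty$); hence $H^1(X,\Omega_X^1\otimes V^{(p^m)}(-np^m))=0$ for $m\gg 0$, and by the injection above $H^1(X,\Omega_X^1\otimes V(-n))=0$ for all $n>n_1$.

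The main obstacle I anticipate is bookkeeping the auxiliary higher cohomology groups — specifically the $H^2$ and $H^3$ terms of the various twisted Cartier sequences — carefully enough that, after passing through the three sequences (\ref{eqCar1}), (\ref{eqCar2}), (\ref{eqCar3}), the only terms left to control at each stage are: (i) $H^0$'s that vanish by a slope inequality with a uniform bound; and (ii) terms of the form $H^i(X,V^{(p^m)}(-np^m))$ with $i\leq 2$, which are handled by Lemma \ref{lemmaVik1} and Remark \ref{remDecember}. One must make sure the chain of bounds $\mu_{max}(B^1_X)$, $\mu_{max}(B^2_X)$, $\mu_{max}(\Omega_X^1\otimes B^1_X)$, $\mu_{max}(Z^1_X)$, etc., all of which depend only on $X$, combine into a single uniform $n_1$; the fact that slopes only get more negative after twisting down by $np^m$ makes this routine once set up, but it is the step where an error is most likely to creep in.
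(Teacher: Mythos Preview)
Your approach via the Cartier sequences is genuinely different from the paper's, and it has a real gap rather than just a bookkeeping issue. The paper does not iterate Frobenius at all here: it simply observes that for some fixed $s$ depending only on $X$ the tangent bundle $\mathcal{T}_X(s)$ is globally generated, so dualizing one gets an exact sequence $0\to \Omega_X^1(-s)\to \mathcal{O}_X^N\to S\to 0$. Tensoring by $V(-m)$ and taking cohomology sandwiches $H^1(X,\Omega_X^1\otimes V(-m-s))$ between $H^0(X,S\otimes V(-m))$, which vanishes once $m>\mu_{\max}(S)$, and $H^1(X,V(-m))^N$, which vanishes for $m>n_0$ by Lemma~\ref{lemmaVik1}. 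This is a one-step reduction to the already-proved lemma and uses only $\dim(X)\geq 2$.

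The gap in your plan is the following. From (\ref{eqCar1}) tensored by $V(-n)$ you get
\[
H^1(X,Z^1_X\otimes V(-n))\to H^1(X,\Omega_X^1\otimes V(-n))\to H^2(X,B^1_X\otimes V(-n)),
\]
so there is no injection \emph{out of} $H^1(X,\Omega_X^1\otimes V(-n))$ coming from this sequence; rather, $H^1(\Omega_X^1\otimes V(-n))$ is an extension of a subgroup of $H^2(B^1_X\otimes V(-n))$ by a quotient of $H^1(Z^1_X\otimes V(-n))$. Only the second piece feeds into your iteration. To kill the first piece you appeal to Remark~\ref{remDecember}, but that remark (vanishing of $H^2(X,V^{(p^t)}(-np^t))$ for $t\gg 0$) rests on Enriques--Severi--Zariski--Serre vanishing for $H^2$, which requires $\dim(X)\geq 3$: when $\dim(X)=2$, Serre duality gives $H^2(X,W_j(-N))\cong H^0(X,W_j^{\ast}\otimes\omega_X(N))^{\ast}$, which \emph{grows} with $N$. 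Hence for $\dim(X)=2$ the accumulated $H^2(B^1_X\otimes V^{(p^j)}(-np^j))$ error terms do not vanish and your chain of injections never materializes. Your final paragraph flags this as ``bookkeeping'', but it is an actual obstruction under the stated hypothesis $\dim(X)\geq 2$; the paper's argument sidesteps it entirely by embedding $\Omega_X^1$ (after a uniform twist) into a trivial bundle.
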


\proof
 Let $\mathcal{T}_X$ be the tangent bundle of $X$, then $\mathcal{T}_X(s)$ is generated by a finite number of global sections  for some integer $s$ depending only on $X$, then we have an exact sequence

\begin{equation}
0\to S^{\ast}\to \mathcal{O}_X^N\to \mathcal{T}_X(s)\to 0
\end{equation}
for some vector bundle $S$; dualizing we obtain 

\begin{equation}
0\to \Omega^1_X(-s)\to \mathcal{O}_X^N\to S\to 0
\end{equation}
then we tensor by $V(-m)$ for some positive integer $m$ and we get the following exact sequence

\begin{equation}\label{eqOmega}
0\to \Omega^1_X(-s)\otimes V(-m)\to \mathcal{O}_X^N\otimes V(-m)\to S \otimes V(-m)\to 0;
\end{equation}
consider the induced long exact sequence

\begin{equation}\label{eqOmegaLong}
H^0(X,S \otimes V(-m))\to H^1(X,\Omega^1_X\otimes V(-m-s))\to H^1(X, V(-m))^N 
\end{equation}
Now $H^0(X,S \otimes V(-m))=0$ as soon as $m>\mu_{max}(S)$ and $s_0:=\mu_{max}(S)$ is clearly independent from $V$. Moreover by Lemma \ref{lemmaVik1} there exists $n_0$ dependent only on $X$ such that $H^1(X, V(-m))^N=0$ for all $m>n_0$. Thus $H^1(X,\Omega^1_X\otimes V(-m-s))=0$ for $m>max\{n_0,s_0\}$.  We have thus proved that there exists an integer $n_1$ such that $H^1(X,\Omega^1_X\otimes V(-n))=0$ for all $n>n_1$.
\endproof

\begin{remark}\label{remOmega}  Assume $dim(X)\geq 2$.  Let $V$ be any essentially finite vector bundle over $X$, $n$ an integer such that  $n> n_1$, as defined in Lemma \ref{lemmaOmega} then $H^1(X,{F_X}_{\ast}(\Omega_X^1)\otimes V(-n))$ vanishes. Indeed ${F_X}_{\ast}(\Omega_X^1)\otimes V(-n)\simeq {F_X}_{\ast}((V^p(-np))\otimes \Omega_X^1)$ thus $H^1(X,{F_X}_{\ast}(\Omega_X^1)\otimes V(-n))\simeq H^1(X,(V^p(-np))\otimes \Omega_X^1)$ and the latter is  trivial by Lemma \ref{lemmaOmega}, $V^p$ being essentially finite.
\end{remark}

\begin{lemma}\label{lemmaPreVik2}  Assume $dim(X)\geq 2$. Let $V$ be any essentially finite vector bundle over $X$, then there exists a \textit{uniform}  positive integer $n_2$ such that   $H^1(X,V(-n)\otimes B^1_X)$ vanishes for all  $n> n_2$.
\end{lemma}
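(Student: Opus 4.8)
The plan is to mimic the strategy already used for Lemma \ref{lemmaVik1} and Lemma \ref{lemmaOmega}, but now feeding in the Cartier exact sequence (\ref{eqCar3}) together with the two vanishing results just established. First I would tensor the exact sequence (\ref{eqCar2}), namely $0\to \mathcal{O}_X \to {F_X}_{\ast}(\mathcal{O}_X)\to B^1_X\to 0$, by $V(-n)$ and pass to the long exact cohomology sequence; the relevant portion is $H^1(X,{F_X}_{\ast}(\mathcal{O}_X)\otimes V(-n))\to H^1(X,B^1_X\otimes V(-n))\to H^2(X,V(-n))$. By the projection formula the first term is $H^1(X,V^{(p)}(-np))$, and by the last term I would instead want to go the other way: actually the cleanest route is to observe that $B^1_X\otimes V(-n)$ fits, via (\ref{eqCar2}), between ${F_X}_{\ast}(\mathcal{O}_X)\otimes V(-n)$ and $\mathcal{O}_X\otimes V(-n)$, giving the exact piece $H^1(X,{F_X}_{\ast}(\mathcal{O}_X)\otimes V(-n))\to H^1(X,B^1_X\otimes V(-n))\to H^2(X,V(-n))\to H^2(X,{F_X}_{\ast}(\mathcal{O}_X)\otimes V(-n))$. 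The rightmost group is $H^2(X,V^{(p)}(-np))$, and by Remark \ref{remDecember} this vanishes for $n$ large (after enough Frobenius iterations). So modulo the Frobenius-iteration bootstrapping it suffices to control $H^1(X,{F_X}_{\ast}(\mathcal{O}_X)\otimes V(-n))$.

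Next I would bound $H^1(X,{F_X}_{\ast}(\mathcal{O}_X)\otimes V(-n))=H^1(X,V^{(p)}(-np))$ using Lemma \ref{lemmaVik1}: since $V^{(p)}$ is again essentially finite, there is a uniform $n_0$ with $H^1(X,V^{(p)}(-m))=0$ for $m>n_0$; taking $n$ large enough that $np>n_0$ suffices. Combining, for $n$ large we get a surjection (in fact eventually an isomorphism after iteration) $H^1(X,B^1_X\otimes V(-n))\twoheadrightarrow \mathrm{(image\ inside)}\ H^2(X,V(-n))$ with the cokernel landing in $H^2(X,V^{(p)}(-np))$. The point of the lemma, however, is to bound $H^1(X,B^1_X\otimes V(-n))$ itself; so I would actually run the argument in the form: from (\ref{eqCar2}) tensored by $V(-n)$, $H^0(X,B^1_X\otimes V(-n))\to H^1(X,V(-n))\to H^1(X,{F_X}_{\ast}(\mathcal{O}_X)\otimes V(-n))=H^1(X,V^{(p)}(-np))$ is not the one I want. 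Rather, the correct sequence for $H^1(X,B^1_X\otimes V(-n))$ is the tail of (\ref{eqCar2})$\otimes V(-n)$: $H^1(X,{F_X}_{\ast}(\mathcal{O}_X)\otimes V(-n))\to H^1(X,B^1_X\otimes V(-n))\to H^2(X,\mathcal{O}_X\otimes V(-n))=H^2(X,V(-n))$. By Lemma \ref{lemmaVik1} (applied to $V^{(p)}$) the left term vanishes once $np>n_0$, so $H^1(X,B^1_X\otimes V(-n))$ injects into $H^2(X,V(-n))$ for all $n$ past a uniform bound. Hence it suffices to know $H^2(X,V(-n))=0$ for $n$ large and uniform — but that is exactly the content of Lemma \ref{lemmaVik2}, which is proved afterwards. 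To avoid circularity, I would instead derive the needed $H^2$ vanishing independently here by iterating (\ref{eqCar3}) and (\ref{eqCar1}): tensoring (\ref{eqCar3}) by $V(-n)$ gives $H^1(X,{F_X}_{\ast}(\Omega_X^1)\otimes V(-n))\to H^1(X,B^2_X\otimes V(-n))$, the source vanishes by Remark \ref{remOmega}; and (\ref{eqCar1}) tensored by $V(-n)$ together with Lemma \ref{lemmaOmega} controls $Z^1_X\otimes V(-n)$; then (\ref{eqCar3}) again relates $H^2(X,Z^1_X\otimes V(-n))$ to $H^2(X,{F_X}_{\ast}(\Omega^1_X)\otimes V(-n))=H^2(X,V^{(p)}(-np)\otimes\Omega^1_X)$, and an Euler-sequence/Lemma \ref{lemmaOmega}-type argument plus Remark \ref{remDecember} finishes the bootstrap. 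So the proof is a careful chase through the three Cartier sequences, reducing everything to the already-established $H^1$-vanishing statements (Lemmas \ref{lemmaVik1}, \ref{lemmaOmega}, Remarks \ref{remDecember}, \ref{remOmega}) applied to the Frobenius pullbacks $V^{(p^m)}$, which remain essentially finite with uniform bounds because the set of stable Jordan--Hölder components appearing in $\{F_X^{t\ast}V\}_{t>0}$ is finite.

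The main obstacle I anticipate is precisely the bookkeeping of \emph{uniformity}: at each Frobenius iteration the twist is multiplied by $p$, so one must check that the slope bounds $\mu_{\max}(B^1_X)$, $\mu_{\max}(S)$ etc. together with the Enriques--Severi--Zariski--Serre vanishing applied to the finite family $\{W_j\}_{j\in J}$ of stable components conspire to give a single threshold $n_2$ that works for all essentially finite $V$ simultaneously, not just one that depends on $V$. The structural input making this possible is the finiteness of $\{W_j\}_{j\in J}$ (via \cite{Duc}, Théorème 2.3.2.4, as in the proof of Lemma \ref{lemmaVik1}): because $F_X^{t_i\ast}(V_i)\simeq V_i$ periodically, the twists $(-np^m)$ cycle through boundedly many line-bundle shifts relative to a fixed finite set of stable bundles, so Serre vanishing applies with a bound independent of $V$. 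A secondary subtlety is making sure the $H^2$-type vanishing invoked along the way is genuinely proved before (or independently of) Lemma \ref{lemmaVik2}, so that the logical order of the paper is respected; if in fact the authors intend Lemma \ref{lemmaPreVik2} only as a stepping stone whose $H^2$ input is itself one of the Remarks, then the cleanest presentation is to phrase the conclusion as an injection $H^1(X,B^1_X\otimes V(-n))\hookrightarrow H^1(X,B^1_X\otimes V^{(p)}(-np))\hookrightarrow\cdots$ and close it off by the same periodicity-plus-Serre-vanishing argument that ended Lemma \ref{lemmaVik1}, with $n_2$ set to the maximum of the finitely many thresholds produced.
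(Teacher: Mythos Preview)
Your approach goes through the wrong Cartier sequence and lands you in the circularity you yourself flag. By tensoring (\ref{eqCar2}) with $V(-n)$ you are forced to control $H^2(X,V(-n))$, which is exactly what Lemma~\ref{lemmaVik2} asserts \emph{using} Lemma~\ref{lemmaPreVik2}. Your proposed workarounds do not close the loop: the sketch ``derive $H^2$ vanishing independently via (\ref{eqCar3}) and (\ref{eqCar1})'' never actually reaches $H^2(X,V(-n))$ from the $H^1$ statements you list, and the final suggestion of an injection chain $H^1(X,B^1_X\otimes V(-n))\hookrightarrow H^1(X,B^1_X\otimes V^{(p)}(-np))\hookrightarrow\cdots$ is not produced by any of the exact sequences at hand (tensoring (\ref{eqCar2}) by $V(-n)$ relates $H^1(B^1_X\otimes V(-n))$ to $H^2(V(-n))$, not to $H^1(B^1_X\otimes V^{(p)}(-np))$, and $F_X^\ast B^1_X\not\simeq B^1_X$).

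The paper avoids $H^2$ altogether by using the \emph{other} two Cartier sequences. Tensor (\ref{eqCar1}) by $V(-n)$: since $H^0(X,\Omega^1_X\otimes V(-n))=\mathrm{Hom}(V^\ast(n),\Omega^1_X)=0$ once $n>\mu_{\max}(\Omega^1_X)=:r_1$, one gets
\[
H^1(X,B^1_X\otimes V(-n))\hookrightarrow H^1(X,Z^1_X\otimes V(-n)).
\]
Then tensor (\ref{eqCar3}) by $V(-n)$: since $H^0(X,B^2_X\otimes V(-n))=0$ once $n>\mu_{\max}(B^2_X)=:r_2$, one gets
\[
H^1(X,Z^1_X\otimes V(-n))\hookrightarrow H^1(X,{F_X}_\ast(\Omega^1_X)\otimes V(-n)).
\]
The right-hand side vanishes for $n>n_1$ by Remark~\ref{remOmega}, so $n_2:=\max\{r_1,r_2,n_1\}$ works. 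No Frobenius iteration, no periodicity argument, no $H^2$; uniformity is immediate because $r_1,r_2,n_1$ depend only on $X$. The ingredient you were missing is simply to feed $B^1_X$ into $Z^1_X$ via (\ref{eqCar1}) rather than pulling it out of ${F_X}_\ast\mathcal{O}_X$ via (\ref{eqCar2}).
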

\begin{proof}
We tensor (cf. Remark \ref{remCartier})

$$0\to B^{1}_X\to Z^{1}_X \to \Omega_X^{1}\to 0$$
by $V(-n)$ then we get the long exact sequence
$$..\to H^0(X,\Omega_X^{1} \otimes V(-n))\to H^1(X,B^{1}_X\otimes V(-n))\to H^1(X,Z^{1}_X\otimes V(-n))\to .. $$
but $H^0(X,\Omega_X^{1} \otimes V(-n))=Hom(V^*(n),\Omega_X^{1})=0$ as soon as $n>\mu_{max}(\Omega_X^{1})$ where $r_1:=\mu_{max}(\Omega_X^{1})$ is independent of $V$. Thus 

\begin{equation}\label{Eqinj1} H^1(X,B^{1}_X\otimes V(-n))\hookrightarrow H^1(X,Z^{1}_X\otimes V(-n))\end{equation}
is injective for all $n>r_1$. In a similar way, tensoring 

$$0\to Z^{1}_X \to {F_X}_{\ast}(\Omega_X^{1})\to B^{2}_X\to 0$$ 
by $V(-n)$ we obtain the injection 
\begin{equation}\label{Eqinj2} H^1(X,Z^{1}_X\otimes V(-n))\hookrightarrow H^1(X,{F_X}_{\ast}(\Omega_X^{1})\otimes V(-n))\end{equation}
for all $n>r_2$, with $r_2:=\mu_{max}(B^{2}_X)$. Combining (\ref{Eqinj1}) and (\ref{Eqinj2})  we obtain the injection

$$H^1(X,B^{1}_X\otimes V(-n))\hookrightarrow H^1(X,{F_X}_{\ast}(\Omega_X^{1})\otimes V(-n))$$
for all $n>max\{r_1,r_2\}$. But the group $H^1(X,{F_X}_{\ast}(\Omega_X^{1})\otimes V(-n))$ is trivial for all $n>n_1$, according to Remark \ref{remOmega}. Now let us set $n_2:=max\{r_1,r_2,n_1\}$, then for all $n>n_2$ we have $H^1(X,B^{1}_X\otimes V(-n))=0$, as required.

\end{proof}

\begin{lemma}\label{lemmaVik2} Assume $dim(X)\geq 3$. Let $V$ be any essentially finite vector bundle over $X$, then there exists a \textit{uniform}  positive integer $n_2$ such that   $H^2(X,V(-n))$ vanishes for all  $n> n_2$.
\end{lemma}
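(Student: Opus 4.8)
The plan is to transcribe the proof of Lemma~\ref{lemmaVik1} one cohomological degree higher, using Lemma~\ref{lemmaPreVik2} to kill the $H^1$-term that now arises as an obstruction. First I would tensor the Cartier sequence~(\ref{eqCar2}) by $V(-n)$, obtaining
$$0\to V(-n)\to {F_X}_{\ast}(\mathcal{O}_X)\otimes V(-n)\to B^1_X\otimes V(-n)\to 0,$$
and read off from the associated long exact sequence the segment
$$H^1(X,B^1_X\otimes V(-n))\to H^2(X,V(-n))\to H^2(X,{F_X}_{\ast}(\mathcal{O}_X)\otimes V(-n))\to\cdots .$$
By the projection formula the third group is isomorphic to $H^2(X,V^{(p)}(-np))$, and by Lemma~\ref{lemmaPreVik2} the first group vanishes as soon as $n>n_2$, with $n_2$ the uniform bound of that lemma. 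Hence the natural map $H^2(X,V(-n))\to H^2(X,V^{(p)}(-np))$ is injective for all $n>n_2$.

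Next I would iterate the construction. Since $V^{(p^m)}=F_X^{m\,\ast}(V)$ is again essentially finite, Lemma~\ref{lemmaPreVik2} applies to it verbatim, so $H^1(X,B^1_X\otimes V^{(p^m)}(-np^m))=0$ whenever $np^m>n_2$, which holds automatically once $n>n_2$. Composing the resulting injections yields
$$H^2(X,V(-n))\hookrightarrow H^2(X,V^{(p^m)}(-np^m))$$
for every $m\geq 0$ and every $n>n_2$, the integer $n_2$ depending only on $X$.

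Finally I would invoke Remark~\ref{remDecember}: by the finiteness argument of Lemma~\ref{lemmaVik1} (resting on \cite{Duc}), the isomorphism classes of stable components occurring in the family $\{F_X^{t\,\ast}(V)\}_{t>0}$ are finitely many, and for each such stable bundle the group $H^2(X,W_j(-np^m))$ vanishes for $m\gg 0$ by the Enriques--Severi--Zariski--Serre vanishing lemma; hence $H^2(X,V^{(p^m)}(-np^m))=0$ for $m\gg 0$. Combined with the injection above this forces $H^2(X,V(-n))=0$ for all $n>n_2$, with $n_2$ uniform. The only place where the hypothesis $dim(X)\geq 3$ is needed — and the one point to be careful about — is precisely this last step: Serre-type vanishing of $H^2$ of an arbitrarily negative twist of a vector bundle requires the cohomological degree $2$ to be strictly smaller than $dim(X)$. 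Everything else is a formal repetition of the degree-one case, the preliminary Lemmas~\ref{lemmaOmega} and~\ref{lemmaPreVik2} having been arranged exactly so that the obstruction groups appearing in the $H^2$ long exact sequences are controlled uniformly in $V$.
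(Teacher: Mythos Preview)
Your proof is correct and follows essentially the same route as the paper: tensor the Cartier sequence~(\ref{eqCar2}) by $V(-n)$, use Lemma~\ref{lemmaPreVik2} to kill the $H^1(X,B^1_X\otimes V(-n))$ term and obtain an injection $H^2(X,V(-n))\hookrightarrow H^2(X,V^{(p)}(-np))$, iterate, and finish via Remark~\ref{remDecember}. Your remark on where the hypothesis $\dim(X)\geq 3$ actually enters is a welcome addition that the paper leaves implicit.
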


\proof
(See also \cite{BiswHoll}, Lemma 4.12.) By Remark \ref{remCartier} we have the exact sequence 
$$
0\to \mathcal{O}_X \to {F_X}_{\ast}(\mathcal{O}_X)\to B^1_X\to 0$$
that we tensor  by $V(-n)$; then we have  the induced long exact sequence

$$..\to  H^1(X,V(-n)\otimes B^1_X)\to  H^2(X,V(-n))\to  H^2(X,V^p(-np))\to ..$$
According to Lemma \ref{lemmaPreVik2} there exists a uniform integer $n_2$ (independent of $V$) such that for all $n>n_2$ the group $H^1(X,V(-n)\otimes B^1_X)$ vanishes  then we have the injection $H^2(X,V(-n))\hookrightarrow  H^2(X,V^p(-np))$ and, iterating, 
$$H^2(X,V(-n))\hookrightarrow  H^2(X,V^{(p^t)}(-np^t));$$
but $ H^2(X,V^{(p^t)}(-np^t))=0$ for $t>>0$ (cf. Remark \ref{remDecember}) then finally $H^2(X,V(-n))=0$ for all $n>n_2$
and we are done.
\endproof

\section{Theorems}
\label{sec:Theorems}

\begin{notation} Throughout this section  $k$ will be, as before, an algebraically closed field of positive characteristic $p$. Furthermore $W:=W(k)$ will denote the ring of Witt vectors over $k$ and ${R}$ the integral closure of $W$ in the algebraic closure ${\overline{K}}$ of $K:=Frac(W)$.  Till the end of the paper  $X$ will denote a connected,  smooth and projective scheme over ${W}$. Everywhere the subscript ${}_R$ will denote the pull back over $Spec(R)$.
\end{notation}

For  any finite extension $K'$ of $K$ we will denote by $W'$ the integral closure of $W$ in $K'$ (with residue field $k$) and by  $X'$ the fibered product $X\times_W W'$. An essentially finite vector bundle over $X'$ has been defined in \cite{MS2} as a vector bundle $V$ over $X'$ whose restrictions $V_k$ and $V_{{K'}}$ respectively to ${(X')}_k:=X'\times_{W'} k\simeq X_k$ and ${(X')}_{K'}:=X'\times_{W'} K'$ are essentially finite in the usual sense. Let us fix  a $W$-valued point $x\in X(W)$ and let $x_R$ be the induced $R$-valued point on $X_R$. Let $\mathcal{L}$ be the full subcategory of $Coh(X_R)$ (coherent sheaves over $X_R$) whose objects are defined as follows: $V\in Ob(Coh(X_R))$ belongs to $Ob(\mathcal{L})$ if and only if there exists a finite extension   $K'$  of  $K$ and an essentially finite vector bundle $V'$   over $X'$,   such that $V$ is  pull back of  $V'$ over $X_R$. The category $\mathcal{L}$ provided with the fiber functor $x_R^*:\mathcal{L}\to R$-$mod$ is a tannakian lattice as defined by Wedhorn in \cite{Wed}. We denote by $\pi_1(X_R,x_R)$ the affine $R$-group scheme associated to it which is the fundamental group scheme of $X_R$. Now let $x'$ be the section on $X'$ induced by $x$. A principal bundle $E$ over  $X'$, pointed over $x'$,  is called Nori-reduced if $H^0(E,\mathcal{O}_E)={W'}$. 

\begin{theorem}\label{theorem1} Let $X$ be a smooth, connected and projective scheme over ${W}$ of relative dimension $dim(X/{W})\geq 2$. Let $H$ be a relatively very ample line bundle on $X$. Then there exists an integer $d_0$ (depending only on $X$) such that for any $d\geq d_0$, any $Y\in |H^{\otimes d}|$ connected and smooth over ${W}$ and any section $y\in Y({W})$ the homomorphism $\widehat{\varphi}:\pi_1(Y_R,y_R)\to \pi_1(X_R,y_R)$ induced by the closed immersion $\varphi:Y\hookrightarrow X$ is faithfully flat.
\end{theorem}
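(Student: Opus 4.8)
The strategy is to reduce the faithful flatness of $\widehat{\varphi}$ to a uniform cohomological vanishing over the special fibre, for which Lemma \ref{lemmaVik1} has been tailored. Recall from \cite{MS2} that $\pi_1(X_R,y_R)$ is a projective limit of finite $R$-group schemes, each of which is obtained by base change from a finite $W'$-group scheme $G$ (for some finite extension $K'$ of $K$, with $W'$ the integral closure of $W$ in $K'$) together with a Nori-reduced pointed $G$-torsor $\pi\colon E\to X'$, where $X'=X\times_WW'$ and the base point is $y'=y\times_WW'$; moreover $\widehat{\varphi}$ is faithfully flat if and only if for every such datum the pulled-back torsor $E_{Y'}:=E\times_{X'}Y'\to Y'$, with $Y'=Y\times_WW'$, is again Nori-reduced, i.e. $H^0(E_{Y'},\mathcal{O}_{E_{Y'}})=W'$. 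So I fix such a torsor, set $\mathcal{A}:=\pi_*\mathcal{O}_E$ (a vector bundle of rank $|G|$ on $X'$, flat over $W'$ since $X$ is), and observe that, $\pi$ being finite and flat, $(\pi_{Y'})_*\mathcal{O}_{E_{Y'}}=\mathcal{A}\otimes_{\mathcal{O}_{X'}}\mathcal{O}_{Y'}=\mathcal{A}|_{Y'}$. Since $E$ is Nori-reduced we have $H^0(X',\mathcal{A})=W'$, so everything reduces to proving $H^0(Y',\mathcal{A}|_{Y'})=W'$.

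As $Y'\in|H^{\otimes d}|$ is an effective Cartier divisor with ideal sheaf $H^{\otimes -d}$, twisting by $\mathcal{A}$ gives the exact sequence $0\to\mathcal{A}(-d)\to\mathcal{A}\to\mathcal{A}|_{Y'}\to0$, where $(-d)$ denotes $\otimes\,H^{\otimes -d}$; its long exact cohomology sequence shows that $H^0(X',\mathcal{A})\to H^0(Y',\mathcal{A}|_{Y'})$ is an isomorphism as soon as $H^0(X',\mathcal{A}(-d))=0$ and $H^1(X',\mathcal{A}(-d))=0$. To obtain these two vanishings uniformly I would descend to the special fibre: writing $\varpi$ for a uniformiser of the discrete valuation ring $W'$ and using the flatness of $\mathcal{A}(-d)$ over $W'$, one has on $X'$ the exact sequence $0\to\mathcal{A}(-d)\xrightarrow{\varpi}\mathcal{A}(-d)\to\mathcal{A}_k(-d)\to0$ with $\mathcal{A}_k=\mathcal{A}|_{X_k}$. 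Its cohomology sequence shows that if $H^0(X_k,\mathcal{A}_k(-d))=0$ then multiplication by $\varpi$ is bijective on the finitely generated $W'$-module $H^0(X',\mathcal{A}(-d))$, whence $H^0(X',\mathcal{A}(-d))=0$ by Nakayama; and if moreover $H^1(X_k,\mathcal{A}_k(-d))=0$ then likewise $\varpi$ acts bijectively on the finitely generated $W'$-module $H^1(X',\mathcal{A}(-d))$, whence $H^1(X',\mathcal{A}(-d))=0$.

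It remains to control the special fibre uniformly in the torsor. By base change along $X_k\hookrightarrow X'$ for the affine morphism $\pi$, $\mathcal{A}_k=(\pi_k)_*\mathcal{O}_{E_k}$ is the pushforward of the structure sheaf of a torsor under the finite $k$-group scheme $G_k$, hence an essentially finite vector bundle on $X_k$ (cf. \cite{Nor}, \cite{MS1}); in particular $\mathcal{A}_k$ is semistable of degree $0$ with respect to $H_k$, so $H^0(X_k,\mathcal{A}_k(-d))=0$ for every $d\ge1$, and, since $\dim(X_k)=\dim(X/W)\ge2$, Lemma \ref{lemmaVik1} provides an integer $n_0$ depending only on $X_k$ — hence only on $X$ — with $H^1(X_k,\mathcal{A}_k(-d))=0$ for all $d>n_0$. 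Setting $d_0:=\max\{n_0,1\}+1$, we conclude that for every $d\ge d_0$, every connected smooth $Y\in|H^{\otimes d}|$, every section $y$ and every torsor $E$ as above we have $H^0(Y',\mathcal{A}|_{Y'})=W'$, i.e. $E_{Y'}$ is Nori-reduced; base-changing to $\mathrm{Spec}(R)$ and letting $G$ run over all finite quotients of $\pi_1(X_R,y_R)$ shows $\widehat{\varphi}$ is faithfully flat. There is no deep new obstacle beyond Lemma \ref{lemmaVik1}: the genuinely mixed-characteristic input, the descent from $W'$ to $k$ by the $\varpi$-torsion sequence and Nakayama, is elementary, and uniformity is automatic because $d_0$ depends only on $X_k$; the step that requires the most care is the initial translation — expressing faithful flatness of $\widehat{\varphi}$ in terms of Nori-reducedness of pulled-back torsors over the non-Noetherian base $R$, and the fact that every finite quotient of $\pi_1(X_R,y_R)$ already descends to some $X'$ — which is exactly where \cite{MS2} is used.
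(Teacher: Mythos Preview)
Your proof is correct and follows essentially the same route as the paper's: reduce faithful flatness to Nori-reducedness of pulled-back torsors over each $X'$, tensor the divisor sequence by $\pi_*\mathcal{O}_E$, and invoke Lemma~\ref{lemmaVik1} on the special fibre. You are in fact more explicit than the paper at two points---the $\varpi$-torsion/Nakayama argument passing from $H^i(X_k,\mathcal{A}_k(-d))=0$ to $H^i(X',\mathcal{A}(-d))=0$, and the identification of $\mathcal{A}_k$ as essentially finite---while your separate treatment of $H^0(X',\mathcal{A}(-d))$ is harmless but unnecessary, since surjectivity of $W'=H^0(X',\mathcal{A})\to H^0(Y',\mathcal{A}|_{Y'})$ already forces the latter to equal $W'$.
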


\begin{proof} As before let $K'$ be any finite extension of $K$, $W'$ the integral closure of $W$ in $K'$ and  $X'$ (resp. $Y'$)  the fibered product $X\times_W W'$ (resp. $Y\times_W W'$). It is sufficient to prove that for any Nori-reduced principal bundle $e:E\to X'$ its restriction to $Y'$, denoted $e_{Y'}:E_{Y'}\to Y'$, is still Nori-reduced. So we are assuming that $H^0(E,\mathcal{O}_E)=H^0(X',e_{\ast}(\mathcal{O}_E))={W'}$. We tensor by $e_{\ast}(\mathcal{O}_E)$ the exact sequence
  
   $$0\to \mathcal{O}_{X'}(-d) \to  \mathcal{O}_{X'} \to \mathcal{O}_{Y'} \to 0$$ 
and we consider the long exact sequence associated   
   
   $$..\to H^0(X',e_{\ast}(\mathcal{O}_{E}))\to  H^0(Y',{e_{Y'}}_{\ast}(\mathcal{O}_{E_{Y'}})) \to  H^1(X',e_{\ast}(\mathcal{O}_{E})(-d)) \to  .. $$ 
by Lemma \ref{lemmaVik1} there exists a uniform $d_0$ (depending only on $X_k$) such that $H^1(X_k,{e_k}_{\ast}(\mathcal{O}_{E_k})(-d))=0$ for all $d\geq d_0$ and where $E_k$ denotes the special fiber of $E$. This implies that $H^1(X',e_{\ast}(\mathcal{O}_{E})(-d))$ vanishes for all $d\geq d_0$. 
Thus $E_{Y'}\to Y'$ is Nori-reduced as $H^0(Y',{e_{Y'}}_{\ast}(\mathcal{O}_{E_{Y'}}))={W'}$ and this concludes the proof.
\end{proof}



\begin{theorem}\label{theorem2} Let $X$ be a smooth, connected and projective scheme over $W$ of relative dimension $dim(X/{W})\geq 3$. Let $H$ be a relatively very ample line bundle on $X$. Then there exists an integer $d_1$ (depending only on $X$) such that for any $d\geq d_1$, any $Y\in |H^{\otimes d}|$ connected and smooth over ${W}$ and any section $y\in Y({W})$ the homomorphism $\widehat{\varphi}:\pi_1(Y_R,y_R)\to \pi_1(X_R,y_R)$ induced by the closed immersion $\varphi:Y \hookrightarrow X$ is an isomorphism.
\end{theorem}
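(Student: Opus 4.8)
The plan is to derive Theorem \ref{theorem2} from Theorem \ref{theorem1} together with an essential-surjectivity statement, and then to prove that statement by combining the positive-characteristic Grothendieck--Lefschetz theorem on the closed fibre of $X$, its classical analogue on the generic fibre, and formal descent over the complete base. By Theorem \ref{theorem1} the homomorphism $\widehat\varphi$ is faithfully flat as soon as $d\geq d_0$, and by the tannakian-lattice formalism of \cite{MS2} this means precisely that the restriction functor $\mathcal{L}(X_R)\to\mathcal{L}(Y_R)$ is fully faithful and identifies $\mathcal{L}(X_R)$ with a full subcategory of $\mathcal{L}(Y_R)$ stable under subquotients; hence $\widehat\varphi$ is an isomorphism exactly when this functor is essentially surjective. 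Full faithfulness can also be seen directly, and doing so pins down the bound: for essentially finite $V,W$ on $X'$ the sheaf $\mathcal{E}:=\mathcal{H}om(V,W)$ is again essentially finite, so $H^0(X',\mathcal{E}(-d))=0$ for $d$ large since $\mathcal{E}$ is semistable of degree $0$ on each fibre, while $H^1(X_k,\mathcal{E}(-d))=0$ for $d\geq d_0$ by Lemma \ref{lemmaVik1}, whence $H^1(X',\mathcal{E}(-d))=0$ as in the proof of Theorem \ref{theorem1}; tensoring $0\to\mathcal{O}_{X'}(-d)\to\mathcal{O}_{X'}\to\mathcal{O}_{Y'}\to 0$ by $\mathcal{E}$ then yields $\mathrm{Hom}_{X'}(V,W)\xrightarrow{\sim}\mathrm{Hom}_{Y'}(V|_{Y'},W|_{Y'})$.

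The substance of the theorem is therefore the following claim: for every finite extension $K'/K$ and every essentially finite vector bundle $V'$ on $Y'$ there is, after possibly enlarging $K'$, an essentially finite vector bundle on $X'$ restricting to $V'$. I would establish this fibrewise and then glue. On the closed fibre $X_k$ has dimension $\geq 3$ and $Y_k$ is a smooth very ample hypersurface, so the positive-characteristic Grothendieck--Lefschetz theorem of \cite{Mehta1} and \cite{BiswHoll} applies and the restriction functor $EF(X_k)\to EF(Y_k)$ is an equivalence of tannakian categories; this is precisely the situation for which the vanishing Lemmas \ref{lemmaVik1} and \ref{lemmaVik2} of Section~2 were prepared, and it is here that the hypothesis $\dim(X/W)\geq 3$ enters, through the $H^2$-vanishing of Lemma \ref{lemmaVik2}. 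On the generic side $X_{\overline K}$ is a smooth projective variety of dimension $\geq 3$ in characteristic $0$, so the classical Grothendieck--Lefschetz theorem (\cite{SGA2}, Expos\'e~X) gives the analogous equivalence and extends $V'|_{Y_{\overline K}}$; this extension is already defined over a finite extension $K''$ of $K'$.

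It then remains to assemble the two fibrewise extensions into a single vector bundle over $W''$. Here I would exploit that $X''$ is proper over the complete local ring $W''$: one extends $V''$ along the formal completion of $X''$ at $Y''$ (equivalently, deforms the closed-fibre extension along the special fibre in the manner dictated by the generic datum), the relevant deformation-theoretic obstructions and indeterminacies being measured by cohomology of twists of $\mathcal{E}nd(V'')$ and of $\mathcal{E}nd(V'')\otimes\Omega^1_{X''}$, which vanish once $d$ exceeds the uniform bounds of Lemmas \ref{lemmaVik1}, \ref{lemmaOmega} and \ref{lemmaVik2}; Grothendieck's existence theorem then algebraises the formal object to a bundle $\mathcal{W}$ on $X''$. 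Uniqueness of extensions (the full faithfulness already proved) forces $\mathcal{W}_k$ and $\mathcal{W}_{K''}$ to coincide with the fibrewise extensions, so $\mathcal{W}$ is essentially finite on both fibres and defines an object of $\mathcal{L}$ restricting to $V''$ on $Y''$. Taking $d_1$ to be the maximum of $d_0$ and of the finitely many uniform integers produced by Lemmas \ref{lemmaVik1}, \ref{lemmaOmega} and \ref{lemmaVik2} gives an integer depending only on $X$.

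I expect the main obstacle to lie in this last, mixed-characteristic, step. The vanishing lemmas of Section~2 are stated over an algebraically closed field and must first be transported to $X_k$ and then propagated to $X''$ exactly as in the proof of Theorem \ref{theorem1}; more delicately, one must verify that the bundle produced on $X''$ genuinely belongs to the category $\mathcal{L}$ of \cite{MS2} --- that is, that its generic and special fibres are both essentially finite and mutually compatible --- rather than being merely a flat deformation of $V''$ across the ample divisor; and one must keep the auxiliary finite extensions of $K$ under control so that the final bound $d_1$ remains independent of them.
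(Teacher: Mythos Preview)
Your architecture is broadly the paper's: reduce via Theorem~\ref{theorem1} to essential surjectivity, extend the closed-fibre bundle using the positive-characteristic Grothendieck--Lefschetz theorem, deform over $W'$, algebraise by Grothendieck existence, and then check essential finiteness on the generic fibre. Two steps, however, are either misdescribed or missing.

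First, the formal/deformation step. The paper does \emph{not} complete $X'$ along $Y'$, nor does it construct a generic-fibre extension in advance and ``glue'' it to the special-fibre one; there is no such gluing available. Instead one works $p$-adically: set $X'_n=X'\times_W W/p^{n+1}$, $Y'_n$ likewise, and $V_n:=V|_{Y'_n}$. Having $U_0$ on $X'_0$ with $\varphi_0'^{\ast}U_0\simeq V_0$, the obstruction to lifting $U_0$ to $X'_1$ lies in $H^2(X'_0,\mathcal{E}nd(U_0))\otimes_k(p/p^2)$, and the restriction map carries it to the obstruction for $V_0$, which vanishes because $V_1$ exists. The point you do not articulate is why the $X'$-obstruction then vanishes: tensoring $0\to\mathcal{O}_{X'_0}(-d)\to\mathcal{O}_{X'_0}\to\mathcal{O}_{Y'_0}\to 0$ by $\mathcal{E}nd(U_0)$ and applying Lemmas~\ref{lemmaVik1} and~\ref{lemmaVik2} makes $H^2(X'_0,\mathcal{E}nd(U_0))\to H^2(Y'_0,\mathcal{E}nd(V_0))$ injective and $H^1(X'_0,\mathcal{E}nd(U_0))\to H^1(Y'_0,\mathcal{E}nd(V_0))$ bijective; the first kills the obstruction, the second lets one correct the lift so that $\varphi_1'^{\ast}U_1\simeq V_1$. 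Iterating and invoking \cite{EGAIII-1} \S5.1 produces $U$ on $X'$ with $U|_{Y'}\simeq V$. The generic-fibre extension plays no role in this construction; it is not ``dictating'' anything. Lemma~\ref{lemmaOmega} is not needed here either.

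Second, your argument that $U_{K'}$ is essentially finite is circular: full faithfulness of $EF(X'_{K'})\to EF(Y'_{K'})$ only compares essentially finite bundles, so it cannot by itself force $U_{K'}$ into $EF(X'_{K'})$. The paper argues directly: since $V_{K'}$ is trivialised by a finite \'etale $G$-torsor $T\to Y'_{K'}$, classical Grothendieck--Lefschetz in characteristic~$0$ extends this to $f'\colon T'\to X'_{K'}$; one then shows $f'^{\ast}U_{K'}$ is trivial by tensoring $0\to\mathcal{O}_{T'}(-d)\to\mathcal{O}_{T'}\to\mathcal{O}_T\to 0$ with $f'^{\ast}U_{K'}$ and using that $f'^{\ast}U_{K'}$ is semistable of vanishing Chern classes (inherited from $U_k$) so that $H^1(T',f'^{\ast}U_{K'}(-d))=0$.
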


\begin{proof}   According to Theorem \ref{theorem1} we only need to prove that $\widehat{\varphi}$ is a closed immersion. This is equivalent to prove that for any finite extension $K'$ of $K$ and any essentially finite vector bundle $V$ over $Y'$ there exists an essentially finite vector bundle $U$ over $X'$ whose restriction $U_{|{Y'}}=\varphi'^{\ast}(U)$ is isomorphic to $V$ ($W'$, $X'$ and $Y'$  are constructed as in the proof of Theorem \ref{theorem1} and $\varphi':Y'\hookrightarrow X'$ is the morphism induced by $\varphi$). Let us set  $X'_n:=X'\times_{{W}}{W}/p^{n+1}$ and $Y'_n:=Y'\times_{{W}}{W}/p^{n+1}$  for every nonnegative integer $n$, thus in particular $X'_0=X'_k$ and $Y'_0=Y'_k$. Let us denote by $V_n$ the $n$-th  restriction of $V$ to $Y'_n$ and similarly $\varphi'_n:Y'_n\hookrightarrow X'_n$ the $n$-th restriction of $\varphi'$. Now  consider the cartesian diagram 

$$\xymatrix{Y'_1\ar@{^{(}->}[r]& X'_1\\ Y'_0\ar[u]\ar@{^{(}->}[r]& X'_0. \ar[u] }$$
  By \cite{BiswHoll} and \cite{Mehta1} we know that there exists $n_1$ (depending only on $X'_0$) such that for every $d\geq n_1$  and  for every $Y'_0\in |H_k^{\otimes d}|$ the homomorphism $$\widehat{\varphi_0}:\pi_1(Y'_0,y'_k)\to\pi_1(X'_0,y'_k)$$ between the fundamental group schemes of the special fibers of $Y'$ and $X'$, induced by $\varphi'_0:Y'_0\to X'_0$,  is an isomorphism. This implies that there exists an essentially finite vector bundle $U_0$ over $X'_0$ such that ${\varphi'}_0^{\ast}(U_0)\simeq V_0$.  From the exact sequence
  
     $$0\to \mathcal{O}_{X'_0}(-d) \to  \mathcal{O}_{X'_0} \to \mathcal{O}_{Y'_0} \to 0$$ 
   
   we obtain, first tensoring by $\mathcal{E}nd(U_0)$, the long exact sequence 
  
  $$\xymatrix{H^1(X'_0,\mathcal{E}nd(U_0)(-d))\ar[r] & H^1(X'_0,\mathcal{E}nd(U_0))\ar[r]^{\delta_1} & H^1(Y'_0,\mathcal{E}nd({V_0}))\ar[r] & \\ H^2(X'_0,\mathcal{E}nd(U_0)(-d))\ar[r] & H^2(X'_0,\mathcal{E}nd(U_0))\ar[r]^{\delta_2} & H^2(Y'_0,\mathcal{E}nd({V_0}))\ar[r] & ...} $$
  
  By Lemmas \ref{lemmaVik1} and \ref{lemmaVik2} there exists a uniform positive integer $d_1$ such that  $H^1(X'_0,\mathcal{E}nd(U_0)(-d))$ and $H^2(X'_0,\mathcal{E}nd(U_0)(-d))$ vanish for every $d\geq d_1$ thus in particular ${\delta_1}$ is an isomorphism and ${\delta_2}$ is injective. This implies that the induced maps:
  
  $$\delta_1':H^1(X'_0,\mathcal{E}nd(U_0))\otimes_k (p/p^2)\to H^1(Y'_0,\mathcal{E}nd({V_0}))\otimes_k (p/p^2)$$ 
  
  and 
  
  $$\delta_2':H^2(X'_0,\mathcal{E}nd(U_0))\otimes_k (p/p^2) \to H^2(Y'_0,\mathcal{E}nd({V_0}))\otimes_k (p/p^2)$$ 
  
  are respectively an isomorphism and an injection. Now $X'_0\to X'_1$ is a thickening of order one (cf. \cite{Illu}, (8.1.3)) so the obstruction  for the existence of a vector bundle over $X'_1$ whose restriction over $X'_0$ is isomorphic to $U_0$ corresponds to an element $o(U_0)$ of  $H^2(X'_0,\mathcal{E}nd(U_0))\otimes_k (p/p^2)$ (cf. \cite{Illu}, Theorem 8.5.3 or \cite{GroBour}, \S 6, Proposition 3). Then consider $\delta_2'(o(U_0))=o(V_0)\in H^2(Y'_0,\mathcal{E}nd({V_0}))\otimes_k (p/p^2)$. This is zero because clearly $V_1$ extends $V_0$, but $\delta_2'$ is injective hence $o(U_0)=0$ too and $U_0$ can thus be extended to a vector bundle $U_1'$ over $X'_1$. In general ${\varphi'}_1^{\ast}(U_1')$ is not isomorphic to $V_1$, but we know (loc. cit.) that the set of deformations of $V_0$ over $Y'_1$ is an affine space under $H^1(Y'_0,\mathcal{E}nd(V_0))\otimes_k (p/p^2)$. So there exists a unique $t\in H^1(Y'_0,\mathcal{E}nd(V_0))\otimes_k (p/p^2)$ such that ${\varphi'}_1^{\ast}(U_1')=V_1 + t$, then the vector bundle $U_1:=U_1'-(\delta'_1)^{-1}(t)$ over $X'_1$ is such that ${\varphi'}_1^{\ast}(U_1)\simeq V_1$ over $Y'_1$ and of course $U_1$ is still a deformation of $U_0$ as $(\delta'_1)^{-1}(t) \in H^1(X'_0,\mathcal{E}nd(U_0))\otimes_k (p/p^2)$ and the set of deformations of $U_0$ over $X'_1$ is an affine space under $H^1(X'_0,\mathcal{E}nd(U_0))\otimes_k (p/p^2)$.
Now  consider the cartesian diagram 

$$\xymatrix{Y'_2\ar@{^{(}->}[r]& X'_2\\ Y'_1\ar[u]\ar@{^{(}->}[r]& X'_1. \ar[u] }$$

The obstruction for the existence of a vector bundle  $U_2$ over $X'_2$ deforming $U_1$ corresponds to an element $o(U_0)$ of  $H^2(X'_0,\mathcal{E}nd(U_0))\otimes_k (p^2/p^3)$; thus proceeding as in previous step we can find $U_2$ such that its restriction to $Y'_2$ is isomorphic to $V_2$.  It is now clear that for any $n$ we can construct a vector bundle $U_n$ over $X'_n$ extending $U_{n-1}$ over $X'_{n-1}$ whose restriction to $Y'_{n}$ is isomorphic to $V_{n}$. Set $\widetilde{X'}:=\underrightarrow{lim}_{i\in
\mathbb{N} } X'_i$ and $\widetilde{U}:=\underleftarrow{lim}_{i\in
\mathbb{N} } U_i$. Then $\widetilde{U}$ is a vector bundle over $\widetilde{X'}$ and by \cite{EGAIII-1} \S 5.1, we finally obtain a vector bundle $U$ over $X'$ whose restriction to $Y'$ is isomorphic to $V$ and whose special fiber is isomorphic to $U_0$, by construction. It remains to prove that $U$ is essentially finite and this will be clear once we will prove that  its generic fiber $U_{{K'}}$ is essentially finite over $X'_{{K'}}$. Since $V_{{K'}}$ is essentially finite then there exists a principal $G$-bundle $f:T\to Y'_{{K'}}$, for $G$ an \'etale finite ${{K'}}$-group scheme, such that $f^{\ast}(V_{{K'}})\simeq \mathcal{O}_T^{\oplus r}$, where $r:=rk(U_{{K'}})$. By Grothendieck-Lefschetz's  theorem in characteristic 0 we know that there exists a principal $G$-bundle $f':T'\to X'_{{K'}}$ over $X'_{{K'}}$ whose restriciton to $Y'_{{K'}}$ is isomorphic to $f:T\to Y'_{{K'}}$:

$$\xymatrix{T\ar@{^{(}->}[r]\ar[d]_{f}& T'\ar[d]^{f'}\\ Y'_{{{K'}}}\ar@{^{(}->}[r]& X'_{{K'}}.  }$$

 We want to prove that $f'$ trivializes $U_{{K'}}$.  So let us consider the closed immersion $T\hookrightarrow T'$ and the associated short exact sequence 

$$0\to \mathcal{O}_{T'}(-d) \to \mathcal{O}_{T'} \to \mathcal{O}_{T} \to 0$$

that we tensor by $f'^{\ast}(U_{{K'}})$, so that we obtain the following long exact sequence

$$..\to  H^0(T',f'^{\ast}(U_{{K'}})) \to H^0(T,\mathcal{O}_{T})^{\oplus r} \to H^1(T',f'^{\ast}(U_{{K'}})(-d)) \to ..$$

If we prove that $H^1(T',f'^{\ast}(U_{{K'}})(-d))=0$ then we are done: first of all we observe that $U_k$ has zero Chern classes as $V_k$ has zero Chern classes. This implies that $U_{K'}$ has zero Chern classes too. Moreover $U_{K'}$ is semistable because $U_k$ is. Furthermore $char(K')=0$ then  in particular  $H^1(T',f'^{\ast}(U_{{K'}})(-d))=0$ vanishes as desired. 

\end{proof}


\medskip

\scriptsize

\begin{flushright} Marco Antei\\ 

Dept. of Mathematical Sciences,\\Korea Advanced Institute of Science and Technology\\
 Yuseong-gu, Daejeon 305-701, Republic of Korea\\
E-mail:  \texttt{marco.antei@gmail.com}\\
\end{flushright}

\begin{flushright} Vikram B. Mehta\\ 
School of Mathematics,\\ Tata Institute of Fundamental Research,\\ Homi Bhabha Road,
Bombay 400005, India\\
E-mail:  \texttt{vikram@math.tifr.res.in}\\
\end{flushright}

\end{document}